\definecolor{darkblue}{rgb}{0,0,0.9}
\definecolor{lightblue}{rgb}{.5,.5,.9}
\newcommand\R{\mathbb{R}}
\newcommand\Q{\mathbb{Q}}
\newcommand\Z{\mathbb{Z}}
\newcommand\del{\partial}
\newcommand{\Aut}{\textup{Aut}}
\newcommand{\Diff}{\textup{Diff}}
\newcommand{\Symp}{\textup{Symp}}
\newcommand{\Coker}{\textup{Coker}}
\newcommand{\Ker}{\textup{Ker}}
\newcommand{\Id}{\textup{Id}}
\newcommand{\ext}{\textup{e}}
\newcommand{\var}{\textup{var}}
\newcommand{\xra}{\xrightarrow}
\newcommand{\twotwo}[4]{\left( \begin{array}{cc} #1 & #2 \\ #3 & #4 \end{array} \right)}
\theoremstyle{plain}
\newtheorem{theorem}{Theorem}[section]
\newtheorem*{theorem*}{Theorem}
\newtheorem{lemma}[theorem]{Lemma}
\newtheorem{corollary}[theorem]{Corollary}
\newtheorem{conjecture}[theorem]{Conjecture}
\theoremstyle{definition}
\newtheorem{definition}[theorem]{Definition}
\theoremstyle{remark}
\newtheorem{remark}[theorem]{Remark}
\newtheorem{question}[theorem]{Question}
\title{Contact open books with flexible pages}
\author{Jonathan Bowden}
\address{Mathematische Fakult\"{a}t, Universit\"{a}t Regensburg, 93053 Regensburg, Germany}
\email{jonathan.bowden@math.uni-regensburg.de}
\author[Crowley]{Diarmuid Crowley}
\address{Department of Mathematics and Statistics, The University of Melbourne,
Parkville, VIC, 3010, Australia}
\email{dcrowley@unimelb.edu.au}
\date{\today}
\begin{document}
\maketitle

\begin{abstract}
We give 
an elementary
topological obstruction for a manifold
$M$ of dimension $2q{+}1 \geq 7$ to admit a contact open book with flexible Weinstein pages and $c_1(\pi_2(M)) = 0$: if the torsion subgroup of 
the $q$-th integral homology group is non-zero, then no such contact open book exists. 
We achieve this by proving that a symplectomorphism of a flexible Weinstein manifold acts trivially on 
integral cohomology. We also produce examples of non-trivial loops of flexible contact structures using related ideas.
\end{abstract}


\section{Introduction} 
\label{s:intro}
%

Contact manifolds admit open books that are compatible with the contact structure. More precisely, Giroux-Mohsen \cite{Groux} showed that one can always find an open book whose page is a \emph{Weinstein} manifold and a contact form whose Reeb flow is transverse to the pages. In general Weinstein manifolds exhibit interesting geometry, however, there is a class of \emph{flexible} Weinstein manifolds \cite{Cieliebak&Eliashberg12} in dimension at least $6$, based on surgery on loose Legendrians as introduced by Murphy\cite{Murphy}, for which an $h$-principle does hold. Specifically, flexible Weinstein manifolds are determined up to deformation by their almost complex deformation type.

In a similar vein, one has a flexible class of contact structures given by those that are overtwisted by Borman-Eliashberg-Murphy \cite{BEM}. Given these formal parallels one might wonder whether every overtwisted contact structure has a supporting open book with flexible pages. Or more generally:

\begin{question} 
Does every almost contact manifold admit a contact open book with flexible pages?
\end{question}
For specific contact manifolds the answer to this question is known to be false due to work of Courte-Massot \cite{Courte&Massot}, who use the existence of such a supporting flexible open book to deduce boundedness properties for the geometry of the corresponding contactomorphism group. 
However, the general question of whether open books with flexible page exist, for example in a given almost contact class, remains open. 

Assuming that the first Chern class of the contact structure vanishes on $2$-spheres,
we provide a simple topological obstruction to the existence of such open books,
which yields a negative answer to the above question in general. 

\begin{theorem}[An Obstruction to Flexible Open Books]\label{thm:main}
Suppose that a contact manifold $(M,\xi)$ of dimension $2q{+}1 \ge 7$ is such that $c_1(\xi)$ vanishes on $2$-spheres;
e.g.\ $c_1(\xi) = 0$.
If $(M, \xi)$ admits a supporting open book with flexible page, then $H_q(M; \Z)$ is torsion free.
\end{theorem}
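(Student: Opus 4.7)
The plan is to combine the paper's key technical result---that a symplectomorphism of a flexible Weinstein manifold acts trivially on integral cohomology---with a Mayer--Vietoris computation that reduces to the trivial-monodromy case. Write the supporting open book as
$$M = M_\phi \cup_{\partial W \times S^1} (\partial W \times D^2),$$
where $W^{2q}$ is the flexible Weinstein page, $\phi$ its monodromy (a symplectomorphism of $W$ equal to the identity near $\partial W$), and $M_\phi$ the mapping torus; the Chern class hypothesis on $\xi$ is used to ensure that the almost-contact page data upgrades to a genuine Weinstein structure, so that $\phi$ is honestly a symplectomorphism. Applying the key lemma yields $\phi_* = \Id$ on $H_*(W;\Z)$, collapsing the Wang long exact sequence for $M_\phi$ into short exact sequences
$$0 \to H_k(W) \to H_k(M_\phi) \to H_{k-1}(W) \to 0.$$

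A flexible Weinstein $2q$-manifold is built by attaching handles of index at most $q$, so $W$ has the homotopy type of a CW complex of dimension at most $q$. Two consequences are needed: $H_k(W) = 0$ for $k > q$, and $H_q(W)$ is torsion-free, as the kernel of a boundary map out of the free abelian group of cellular $q$-chains. In addition, Lefschetz duality on $W$ gives $H_k(W,\partial W) \cong H^{2q-k}(W) = 0$ for $k \le q-1$, so the inclusion induces a surjection $\iota_*\colon H_k(\partial W) \twoheadrightarrow H_k(W)$ in that range. Feeding these into the Mayer--Vietoris sequence for the open book decomposition of $M$, the computation formally agrees with the trivial-monodromy case $\phi = \Id$, where $M = \partial(W \times D^2)$. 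For this filling, the long exact sequence of the pair $(W \times D^2, \partial(W \times D^2))$ together with Lefschetz duality
$$H_k(W \times D^2, \partial(W \times D^2)) \cong H^{2q+2-k}(W)$$
shows the relative groups vanish for $k \le q+1$, yielding $H_q(\partial(W \times D^2)) \cong H_q(W)$, which is torsion-free.

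The main obstacle I expect is the reduction from general $\phi$ to $\phi = \Id$: even with $\phi_* = \Id$ on homology, the Wang short exact sequences for $M_\phi$ can carry non-trivial extension data, and one must verify that this data does not leak into $H_q(M)$. The cleanest route is to trace the Mayer--Vietoris differentials directly in degrees $q$ and $q-1$, using the surjectivity of $\iota_*$ from the previous paragraph to cancel the only potentially torsion-introducing classes; alternatively, a Serre spectral sequence argument for the fibration $W \to M_\phi \to S^1$ can confirm that $H_*(M;\Z)$ depends on $\phi$ only through $\phi_*$.
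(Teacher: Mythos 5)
There is a genuine gap at the heart of your reduction: the homology of the open book $M$ is \emph{not} determined by the action $\phi_*$ on the absolute homology $H_*(W)$. It is governed by the \emph{variation} $\var(\phi)\colon H_q(W,\partial W)\to H_q(W)$, $[c]\mapsto[\phi_*(c)-c]$ for relative cycles $c$, and the paper shows $H_q(M)\cong\coker(\var(\phi))$. Knowing $\phi_*=\Id$ on $H_q(W)$ only tells you that $\var(\phi)$ vanishes on the image of $j_*\colon H_q(W)\to H_q(W,\partial W)$, which is in general far from all of $H_q(W,\partial W)\cong H^q(W)$. The model to keep in mind (one dimension too low for the theorem, but the algebra is identical) is a Dehn twist on an annulus $A$: it is the identity on $H_1(A)$, yet its $n$-th power has variation equal to multiplication by $n$ from $H_1(A,\partial A)\cong\Z$ to $H_1(A)\cong\Z$, and the resulting open book is a lens space with torsion in $H_1$. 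So your concluding claim that ``$H_*(M;\Z)$ depends on $\phi$ only through $\phi_*$'' is false; the extension data and Mayer--Vietoris differentials you flag as the expected obstacle are exactly where the torsion lives, and neither the Wang sequence nor the Serre spectral sequence for $W\to M_\phi\to S^1$ can see the variation, since that map is only defined on relative classes.

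The paper's proof supplies precisely the missing ingredient, and it requires genuinely more than Corollary~\ref{cor:Identity} applied to the page. One writes $M$ as a twisted double $(W\times I)\cup_{\ext(\phi)}(W\times I)$, where $\ext(\phi)=\phi\cup\Id$ acts on the double $DW$, and observes (Lemma~\ref{lem:var(f)}) that $\ext(\phi)_*$ on $H_q(DW)\cong H_q(W_1)\oplus H_q(W_0,\partial W_0)$ is upper triangular with off-diagonal entry $\var(\phi)$. To kill that entry one must show that $\ext(\phi)$ acts trivially on $H^q$ of a flexible Weinstein domain strictly larger than the page, namely the $q$-skeleton $DW^{(q)}$ of the double: the flexible structure extends over the added critical handles (here the Chern class hypothesis is used to extend the almost complex structure), and $\ext(\phi)$ restricts to an exact symplectomorphism of $DW^{(q)}$ because it is the identity on those handles. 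Only then does the vanishing of symplectic homology force $\var(\phi)=0$ via Lemma~\ref{lem:torsion_free}, giving $H_q(M)\cong H_q(W)$, which is torsion free. Your proposal omits this extension step entirely, and without it the argument cannot close.
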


\noindent Note that the 
conclusion of Theorem \ref{thm:main}
holds automatically for subcritically fillable manifolds $M^{2q+1}$, since these manifolds have open books with trivial monodromy and a straightforward Mayer-Vietoris argument shows that the $q$-th homology is torsion free (see Section \ref{s:intro} for further details in the general case). Hence one can say that contact manifolds with flexible page behave like subcritically fillable manifolds on the level of {\em homology}.

We further remark that the assumption on the first Chern class ensures that symplectic homology has a well-defined grading. In particular, the above applies to \emph{real} projective spaces, or more generally Lens spaces, in dimension $4q{+}3 \ge 7$. It also applies to the connected sum of any contact manifold with such spaces. Our arguments also apply more generally to contact manifolds with open books whose pages have vanishing symplectic homology. This is then a strictly larger class class of open books in view of Murphy-Siegel \cite{Murphy-Siegel}. We thank Oleg Lazarev for this observation.


The key ingredient in proving Theorem \ref{thm:main} is the following result (restated as
Corollary~\ref{cor:Identity}), which places restrictions on the monodromies of open books with flexible pages. 

\begin{theorem}\label{thm:Identity}
Let $(W_{flex},d\lambda)$ be a flexible Weinstein domain with vanishing first Chern class and let $\varphi$ be an exact symplectomorphism supported in the interior of $W_{flex}$. Then the induced action on integral cohomology is trivial.
\end{theorem}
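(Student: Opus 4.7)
The plan is to combine the vanishing of symplectic cohomology for flexible Weinstein domains with the naturality of the Viterbo long exact sequence. The hypothesis $c_1(W_{flex}) = 0$ ensures that $SH^*$ has a well-defined $\Z$-grading, so the various actions below are grading-preserving.

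First I would extend $\varphi$ by the identity across the boundary to a compactly supported exact symplectomorphism $\hat{\varphi}$ of the symplectic completion $\hat{W}_{flex}$. Such a $\hat{\varphi}$ induces automorphisms on $SH^*(W_{flex})$, on the positive part $SH^*_+(W_{flex})$, and on $H^*(W_{flex})$, all compatible with the Viterbo long exact sequence
\begin{equation*}
\cdots \to H^*(W_{flex}) \to SH^*(W_{flex}) \to SH^*_+(W_{flex}) \xrightarrow{\delta} H^{*+1}(W_{flex}) \to \cdots
\end{equation*}
(up to a grading shift by the complex dimension $q$). By the surgery-theoretic results of Bourgeois--Ekholm--Eliashberg, together with Murphy and Murphy--Siegel, flexibility gives $SH^*(W_{flex}) = 0$, so the connecting map $\delta$ yields a natural isomorphism $SH^{*-1}_+(W_{flex}) \xrightarrow{\cong} H^*(W_{flex})$.

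The next and key step is to show that $\hat{\varphi}^*$ acts trivially on $SH^*_+(W_{flex})$. I would choose an admissible Hamiltonian $H$ that is $C^2$-small Morse on $W_{flex}$ and linear of large slope on the cylindrical end $\partial W_{flex} \times [1, \infty)$; all non-constant $1$-periodic orbits of $X_H$ lie in that end, hence outside the support of $\hat{\varphi}$, so $\hat{\varphi}^* H = H$ there. The continuation map computing $\hat{\varphi}^*$ can then be arranged to be the identity on the Reeb-orbit generators of $SH^*_+$. Combined with naturality of the LES and the isomorphism $\delta$, this transports the triviality on $SH^*_+$ to triviality on $H^*(W_{flex})$, proving the theorem.

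The hardest part will be the chain-level verification that $\hat{\varphi}^*$ is the identity on $SH^*_+$. While geometrically clear, one must carefully pick interpolating Hamiltonians and almost complex structures and apply maximum-principle and action-energy estimates to confine the relevant continuation trajectories to the cylindrical end, where $\hat{\varphi} = \Id$ and the continuation reduces to the identity. A secondary technical point is to ensure that $\hat{\varphi}^*$ is well-defined (independent of the choices) as an automorphism of $SH^*$; this is standard for compactly supported exact symplectomorphisms of a Liouville manifold with $c_1 = 0$.
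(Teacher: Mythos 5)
Your proposal is correct and follows essentially the same route as the paper: vanishing of $SH_*$ for flexible Weinstein domains, the long exact sequence relating $SH$, $SH^+$ and ordinary cohomology, naturality under compactly supported exact symplectomorphisms, and the observation that the action on $SH^+$ is trivial because its generators live near the boundary, away from the support of $\varphi$. The paper simply asserts this last point, whereas you correctly flag the chain-level continuation-map argument as the step requiring care; otherwise the two arguments coincide.
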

\noindent Again this result applies more generally to any Liouville manifold (such that $c_1$ vanishes on spheres) whose symplectic homology vanishes.

Theorem \ref{thm:main} can be seen as some sort of suspension of the classification results of Lazarev \cite{Laz},  who used symplectic homology to show that the homology of flexible fillings of contact manifolds given as boundaries of a specific flexible Weinstein domain are determined by the contact structure. Indeed using Lazarev's results we obtain the following:

\begin{corollary}[Homology determined by the binding]\label{cor:main}
Suppose that  $(M,\xi),(M,\xi')$ are contact manifolds of dimension $2q{+}1 \ge 7$ such that
$c_1$ vanishes on $2$-spheres; e.g.\ $c_1(\xi) = 0$.  If each contact manifold admits a flexible supporting open book with contactomorphic bindings,
then $H_*(M; \Z) \cong H_*(M'; \Z)$.
\end{corollary}
Finally, any diffeomorphism which is the identity on a neighbourhood of the boundary and preserves the formal symplectic class yields a loop of contact structures on the boundary. Such loops are obtained by applying the $h$-principle to deform the pulled-back Weinstein structure to the initial one and restricting to the boundary.  If this loop is trivial, then the diffeomorphism is isotopic to a sympectomorphism. 
However, using the results above we can deduce that if the action of $f$ on cohomology is non-trivial, then any resulting loop is non-trivial in the space of (flexible) contact structures on $M = \partial W_{flex}$ (cf.\ Theorem \ref{thm:loops}).

\subsection*{Outline of the proof of Theorem \ref{thm:main}:} Let $W^{2q}$ be the page of a smooth open book decomposition of a $(2q{+}1)$-manifold
\[ M = (\del W \times D^2) \cup_{\Id} T_f, \]
where $f \colon W \to W$ is a diffeomorphism which is the identity near $\del W$
and $T_f := (W \times I)/\!\sim$ is the mapping torus of $f$.  We let $\Diff_\del(W)$ denote the group of diffeomorphisms of $W$ which are the identity near the boundary and let $\pi_0 \Diff_\del(W)$ be
the corresponding mapping class group.

Let $DW = W \cup_{\Id} W$ be the trivial double of $W$, so that $DW = \del (W \times I)$ after smoothing corners.
Define the {\em extension} homomorphism
$$ \ext \colon \Diff_\del(W) \to \Diff(\del (W \times I)), 
\quad f \mapsto f \cup \Id_W.$$
We note that the open book structure on $M$ also expresses $M$ as a {\bf twisted double}
$$ M = (W \times I) \cup_{\ext(f)} (W \times I).$$
In low-dimensions, this corresponds to the familiar path from an open book to a Heegaard decomposition.

The key observation is that, if the Weinstein structure on $W$ is {\em flexible} then the vanishing of symplectic homology can be used to show that the action on homology for the gluing map above is trivial in all degrees up to $q$:
\[ e(f)_* = \Id \colon H_q(DW) \to H_q(DW) \]
From this an easy Mayer-Vietoris argument implies that the homology is torsion free in degree $q$. Note that it is not good enough to simply consider the monodromy of the page, one must extend this to the subdomain of the double 
given by attaching \emph{all} critical handles.
This however provides no problems as this subdomain can be given a flexible Weinstein structure extending the one on $W$ and the diffeomorphism extends by the identity too.

\subsection*{Conventions} All (co)homology groups will be taken with $\Z$-coefficients, unless otherwise indicated. 
All manifolds are smooth, compact, connected and oriented with (possibly empty) boundary.

\subsection*{Acknowledgements} We thank Oleg Lazarev, Klaus Niederkr\"uger, Roger Casals, and Sylvain Courte for helpful comments. We are particularly grateful to Otto Van-Koert for his clarifications concerning monodromies of contact open books,  Zhengyi Zhou for explanations about symplectic homology as well as the anonymous referee for their careful reading and perceptive remarks.

\section{Background}
\subsection{Weinstein Manifolds and Flexibility}
A compact symplectic manifold $(W, \omega)$ with boundary is a Liouville domain if there is a vector field $X$ that is outward pointing at the boundary such that 
$$\omega = L_X\omega = d \iota_X\omega.$$
In particular, the symplectic form is exact and as the precise vector field is not essential, we denote such a manifold simply by $(W,d\lambda)$. If the vector field is {\bf gradient-like} for a Morse function (Lyapanov Function) $\varphi: W \to [0,1]$ having $\partial W$ as a regular level, then $W$ is a {\bf Weinstein domain}. The Weinstein condition then implies that all critical points have index at most $q$ \cite[Chapter 11]{Cieliebak&Eliashberg12}. Moreover, any symplectic form determines an almost complex structure that is unique up to homotopy. This forms the formal data underlying a Weinstein structure. Eliashberg proved the following existence $h$-principle:
\begin{theorem}[Eliashberg, \cite{E}]
Let $(W^{2q},\varphi,J)$ be an almost complex manifold with $q \geq 3$ and boundary admitting a Morse function with critical points of index at most $q$ so that the boundary is a regular level set. Then $W$ admits a Weinstein structure in the same formal class, having $\varphi$ as a Lyapunov function.
\end{theorem}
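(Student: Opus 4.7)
The plan is to build the Weinstein structure handle by handle, using the Morse function $\varphi$ to organize an induction on critical points. Let $c_1<c_2<\dots<c_N$ be the critical values of $\varphi$ and choose regular values $c_i<r_i<c_{i+1}$; set $W_i:=\varphi^{-1}([0,r_i])$. I aim to inductively equip each $W_i$ with a Liouville form $\lambda_i$ and a gradient-like Liouville vector field $X_i$ for $\varphi|_{W_i}$, in such a way that the almost complex structure tamed by $d\lambda_i$ is homotopic (through almost complex structures on $W$) to $J|_{W_i}$. The base case is trivial: near the minima the sublevel set is a disjoint union of standard symplectic balls.

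For the inductive step, let $p$ be a critical point of index $k\le q$ with $\varphi(p)=c_{i+1}$. The descending disk of $X_i$ at $p$ (or rather the candidate one prescribed by $\varphi$ and $J$) meets $\partial W_i$ in an embedded sphere $\Lambda\cong S^{k-1}$. For a Weinstein $k$-handle to be attached along $\Lambda$ and realize $\varphi$ as a Lyapunov function, $\Lambda$ must be isotropic for the contact structure $\xi_i=\ker(\lambda_i|_{\partial W_i})$. The almost complex structure $J$, together with the tangent data of the stable manifold of $p$, gives $\Lambda$ a formal isotropic structure in $(\partial W_i,\xi_i)$. One now applies an h-principle for isotropic embeddings to turn this formal data into a genuine isotropic embedding. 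In the subcritical case $k<q$, this is the Gromov--Duchamp h-principle for isotropic embeddings in codimension at least two, which is unobstructed. In the critical case $k=q$, one first applies Gromov's h-principle for Legendrian immersions to realize the formal data by a Legendrian immersion, and then, since $2(q-1)<2q-1$, perturbs to a Legendrian embedding by general position (legitimate because we are in dimension $2q\ge 6$). Once $\Lambda$ is genuinely isotropic, a standard Weinstein $k$-handle attachment extends $(\lambda_i,X_i)$ across $p$, yielding $(\lambda_{i+1},X_{i+1})$ on $W_{i+1}$ with $\varphi$ still Lyapunov for $X_{i+1}$ after a $C^0$-small deformation of the handle's radial Morse function.

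The main obstacle, and where the dimensional hypothesis $2q\ge 6$ enters decisively, is the critical case $k=q$: Legendrian isotopy is much more rigid than formal Legendrian isotopy, so one cannot hope to realize a prescribed formal Legendrian by an honest Legendrian up to contact isotopy. The point is that for \emph{existence} of some Legendrian representative in the formal class of $\Lambda$, one only needs the immersion-level h-principle plus general position, and this is precisely what succeeds. A secondary bookkeeping obstacle is that each handle attachment must preserve the formal class: this is handled by noting that the standard Weinstein handle realizes the unique up-to-homotopy tangential structure compatible with isotropic attachment, so the homotopy class of the induced almost complex structure can be matched to $J|_{W_{i+1}}$ by propagating the homotopy already constructed on $W_i$ across the handle. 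Iterating through all $N$ critical points produces the desired Weinstein structure on $W$ in the formal class of $J$.
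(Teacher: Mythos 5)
This statement is quoted background (Eliashberg's existence theorem, cited as \cite{E}; see also \cite{Cieliebak&Eliashberg12}); the paper offers no proof of it, so there is nothing internal to compare against. Your sketch does follow the standard route from the literature: induct over the critical values of $\varphi$, attach Weinstein handles, use the Gromov--Duchamp $h$-principle for subcritical isotropic embeddings, and in the critical case combine the $h$-principle for Legendrian immersions with general position to get an embedded Legendrian attaching sphere. That is the correct architecture.

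There is, however, a genuine gap at the step you dispose of as ``secondary bookkeeping.'' Attaching a Weinstein $k$-handle along an isotropic sphere $\Lambda \subset \partial W_i$ requires more than the embedding being isotropic: the normal bundle of $\Lambda$ splits canonically as $T\Lambda \oplus \R_{\mathrm{Reeb}} \oplus (\text{conformal symplectic normal bundle})$, and the framing this induces must agree with the framing prescribed by the given smooth handle decomposition (equivalently, the handle attachment must reproduce the smooth manifold $W_{i+1}$ and the formal class $J|_{W_{i+1}}$). Arranging this agreement --- by building the framing data into the formal isotropic/Legendrian structure and invoking the $h$-principle in a form that realizes the bundle data, not just the underlying embedding --- is the crux of Eliashberg's theorem, and it is exactly where the hypothesis $2q \ge 6$ is genuinely used. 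Your sentence asserting that ``the standard Weinstein handle realizes the unique up-to-homotopy tangential structure compatible with isotropic attachment'' is the statement to be proved, not an observation. Relatedly, you attach the dimension hypothesis to the wrong step: the general-position count $2(q-1) - (2q-1) = -1 < 0$ that removes double points of a generic Legendrian immersion holds in every dimension, including $2q = 4$, where the theorem is false. The failure in dimension $4$ is precisely the framing (Thurston--Bennequin) obstruction for critical handles, which is why that issue cannot be waved away in the higher-dimensional proof either.
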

Moreover, there is an important subclass of Weinstein manifolds built by attaching subcritical handles along isotropic spheres and critical handles along loose Legendrians as introduced by Murphy \cite{Murphy}.

\begin{theorem}[Cieliebak-Eliashberg \cite{Cieliebak&Eliashberg12}]
Let $(W^{2q},\varphi,J)$ be an almost complex manifold with boundary admitting a Morse function having critical points of index at most $q$ so that the boundary is a regular level set. Then $W$ admits a flexible Weinstein structure in the same formal class. Moreover, any two flexible Weinstein structures that are formally homotopic, are homotopic through Weinstein structures.
\end{theorem}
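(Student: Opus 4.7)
The plan is to establish both halves of the statement by an induction on the handles supplied by the Lyapunov function $\varphi$, reducing each step to an h-principle on the symplectic side. The subcritical steps are handled by Gromov's h-principle for isotropic embeddings; the critical step requires Murphy's h-principle for loose Legendrians, which is the single deep input of the theorem.

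For existence, decompose $W$ as $h_0 \cup \cdots \cup h_N$ with handles of index at most $q$, ordered by increasing index. Inductively, assume the partial handlebody $W_i$ already carries a flexible Weinstein structure in the restricted formal class, with its underlying almost complex structure homotopic rel boundary to the restriction of $J$. To attach $h_{i+1}$ of index $k < q$, the attaching sphere $S^{k-1} \subset \del W_i$ is formally isotropic in the contact boundary; since $k-1 < q-1$ it may be realized by a genuine isotropic embedding via the subcritical h-principle, and a standard subcritical Weinstein handle extends the structure. For $k = q$, the attaching sphere is formally Legendrian in a contact manifold of dimension $2q-1 \ge 5$, and Murphy's theorem furnishes a Legendrian isotopic representative which is loose; attaching the associated flexible critical handle preserves flexibility and completes the inductive step.

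For uniqueness, suppose $(W, \lambda_0, \varphi_0)$ and $(W, \lambda_1, \varphi_1)$ are two flexible Weinstein structures that are formally homotopic through pairs consisting of an almost complex structure and a Morse function of index at most $q$. The plan is to interpolate them by a path of Weinstein structures. Any two smooth handle decompositions of $W$ are related by a finite sequence of isotopies of attaching spheres, handle slides, and creation/cancellation of canceling handle pairs. The main obstacle, and the crux of the argument, is that each such smooth move must lift to a Weinstein deformation inside the flexible class: isotopies of loose Legendrians persist by the parametric form of Murphy's h-principle, subcritical attaching isotopies persist by the parametric subcritical h-principle, and creation/cancellation of a $(k,k{+}1)$-pair with $k+1 \le q$ lifts because the relevant Legendrian can be made loose. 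Concatenating the resulting Weinstein homotopies yields the asserted deformation equivalence. Granted Murphy's theorem, both halves reduce to inductive handle-by-handle arguments in the spirit of Smale's handle trading for the smooth $h$-cobordism theorem, adapted to the Weinstein category.
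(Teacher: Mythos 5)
This statement is quoted as background from Cieliebak--Eliashberg \cite{Cieliebak&Eliashberg12} and Murphy \cite{Murphy}; the paper itself offers no proof, so there is nothing internal to compare your argument against. Measured against the actual proofs in those sources, your outline is the standard one: handle-by-handle induction, with Gromov's h-principle for subcritical isotropic embeddings handling indices $k<q$ and Murphy's h-principle for loose Legendrians handling the critical index $q$ (this is exactly why the dimension bound $2q\ge 6$ is needed, as you note). Two points where your sketch papers over real content. First, in the existence step you say Murphy's theorem furnishes a ``Legendrian isotopic'' loose representative; it is only \emph{formally} isotopic, which is what the conclusion ``in the same formal class'' reflects. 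Second, and more seriously, the uniqueness half is not merely a matter of lifting Cerf moves one at a time: flexibility requires each critical attaching sphere to be loose \emph{in the complement of the previously attached handles}, and looseness in this relative sense is not automatically preserved under handle slides, reordering, or the births occurring in a generic path of Lyapunov functions. Controlling this — via the parametric and relative forms of the loose h-principle and a careful two-parameter Morse-theoretic argument — is the bulk of the work in \cite[Chapters 14--15]{Cieliebak&Eliashberg12}, and your sketch asserts rather than establishes it. As a roadmap to the cited proof your proposal is accurate; as a proof it leaves the hardest step as an assertion.
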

We will also need the following relative version of the above:

\begin{theorem}[Cieliebak-Eliashberg \cite{Cieliebak&Eliashberg12}]\label{thm_CE_extend}
Let $(V_{flex}^{2q},d\lambda,\varphi|_V, J|_V)$ 
be a flexible Weinstein subdomain of $(W^{2q},\varphi,J)$, so that $W$ is obtained from $V$ by attaching handles of index at most $q$ and the almost complex structure $J$ extends the almost complex structure on $V$. Then $W$ admits a flexible Weinstein structure in the same formal class extending that on $V$. Moreover, any two flexible extensions that are formally homotopic, are Weinstein homotopic relative $V_{flex}$.
\end{theorem}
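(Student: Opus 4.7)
The plan is to proceed inductively on a relative handle decomposition of $W$ over $V$. First, extend the Lyapunov function $\varphi|_V$ to a Morse function $\varphi \colon W \to [0,1]$ all of whose critical points in $W \setminus V$ have index at most $q$; this exists by the hypothesis on the handle structure. After standard Morse-theoretic rearrangement, one obtains a filtration $V = V_0 \subset V_1 \subset \cdots \subset V_N = W$ in which each $V_{k+1}$ is obtained from $V_k$ by attaching a single handle, with all subcritical ($<q$) handles preceding the critical ($=q$) ones. Throughout the induction the almost complex structure $J$ is carried along as fixed formal data.

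At the $k$-th stage, the induction hypothesis gives a flexible Weinstein structure on $V_k$ formally equivalent to $(V_k, J|_{V_k})$. The attaching sphere $S \subset \del V_k$ of the next handle is, a priori, only a \emph{formal} isotropic (resp.\ Legendrian) sphere for the induced contact structure on $\del V_k$, the formal data being read off from $J$. For a subcritical handle one applies the $h$-principle for isotropic embeddings from \cite{Cieliebak&Eliashberg12} to isotope $S$ through formal isotropic embeddings to an honest isotropic embedding with the correct conformal symplectic normal framing; a standard Weinstein handle attachment then extends the structure. For a critical handle one invokes Murphy's $h$-principle \cite{Murphy}: the formal Legendrian $S$ is formally Legendrian isotopic to a loose Legendrian $S'$, and attaching a flexible Weinstein $q$-handle along $S'$ preserves flexibility. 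Iterating across all handles produces a flexible Weinstein structure on $W$ extending the one on $V$ and lying in the prescribed formal class.

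The main obstacle is bookkeeping: one must verify that the homotopies of attaching data through formal isotropic/Legendrian embeddings can be arranged so that the resulting Weinstein almost complex structure matches, up to homotopy rel.\ $V_k$, the restriction of the prescribed $J$. This is essentially what the relative versions of the Cieliebak--Eliashberg and Murphy $h$-principles are designed to output; the work lies in checking that each $h$-principle can be applied rel.\ $V$, so that the previously constructed flexible structure is left untouched.

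For the uniqueness statement, suppose two flexible Weinstein structures on $W$ both extend the given one on $V$ and are formally homotopic rel.\ $V$. The deformation $h$-principle for flexible Weinstein structures (\cite{Cieliebak&Eliashberg12}, combined with \cite{Murphy}) produces a one-parameter family of Weinstein structures connecting them through flexible structures. The same argument, applied parametrically in $t \in [0,1]$ and relatively with $V$ fixed, upgrades this to a deformation that is constant on $V$, giving the desired relative deformation equivalence.
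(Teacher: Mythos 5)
This statement is quoted in the paper as a background result, attributed to Cieliebak--Eliashberg and Murphy; the paper gives no proof of it, so there is nothing internal to compare your argument against. Measured against the actual proofs in the cited references, your outline is the standard one and contains no wrong step: order a relative handle decomposition with subcritical handles first, extend across subcritical handles via the $h$-principle for isotropic embeddings, and attach critical handles along loose Legendrians supplied by Murphy's existence $h$-principle, checking at each stage that the formal class is respected rel.\ $V$.

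Two points deserve flagging. First, flexibility in \cite{Cieliebak&Eliashberg12} is defined via a partition into elementary cobordisms in each of which the attaching spheres of the critical handles form a loose Legendrian \emph{link}; your one-handle-at-a-time attachment makes each critical handle its own elementary cobordism, which does satisfy the definition, but you should say so explicitly rather than assert that ``attaching a flexible handle preserves flexibility.'' Second, the uniqueness clause is not obtained by ``the same argument applied parametrically'': it is the first Weinstein homotopy theorem (uniqueness of flexible structures in a formal class), whose proof is substantially harder than the existence argument and rests on the parametric form of Murphy's $h$-principle for loose Legendrian links together with a carving/handle-trading argument. As written, your final paragraph cites the conclusion of that theorem rather than proving it, so the uniqueness half of your proposal is an appeal to the literature dressed as an argument --- which is acceptable here only because the paper itself treats the entire statement the same way.
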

\noindent In what follows we will denote flexible Weinstein manifolds $(W_{flex},d\lambda)$, ignoring the specific Morse function.

\subsection{Symplectic Homology} We will only summarize the relevant facts of symplectic homology that we shall need and refer to the paper of Cieliebak-Oancea \cite{CO} for background. Note that for technical reasons Cieliebak-Oancea only consider symplectic homology with field coefficients, which we will take to be the rationals $\mathbb{Q}$.

\smallskip
\noindent
  \textbf{Assumption:} We henceforth require that $c_1$ vanishes on $\pi_2$ for all manifolds.
\smallskip

\noindent To a Liouville domain $(W,d\lambda)$ one can associate an invariant $SH_*(W,d\lambda)$ which is generated by periodic orbits of admissible Hamiltonians $H$ on a Liouville completion of $W$, which are $C^2$-small on the interior and proportional to a gradient function generating an outward pointing vector field near the boundary. In particular, there is a natural subcomplex generated by stationary orbits whose associated homology is just the ordinary Morse cohomology of $W$. The homology of the complex generated by non-stationary orbits is called the positive symplectic homology $SH^+(W,d\lambda)$, and one has a tautological long exact sequence:
$$ \cdots \to H_{Morse}^{n-k}(W;\mathbb{Q}) \to SH_k(W,d\lambda) \to SH^+_k(W,d\lambda) \to H_{Morse}^{n-k+1}(W;\mathbb{Q}) \cdots $$
\subsection*{Induced maps on $SH^*$} The groups $H_{Morse}^*(W;\mathbb{Q})$ and $SH_*(W,d \lambda)$ are natural with respect to the action of the group of {\em exact} symplectomorphisms%
\footnote{That is those that preserve the primitive $\lambda$ up to a compactly supported exact form $df$.} 
\!with support in the interior of $W$ denoted $\textrm{Symp}_\partial(W,d\lambda)$. More precisely, one acts on the defining chain complexes that involve generic choices of Hamiltonian and almost complex structures and these can then be {\em canonically} identified via Conley resp.\ Floer-continuation maps. Note that the Floer continuation maps specialise to the Conley continuation maps on the Morse complex given by stationary periodic orbits (cf.\ \cite{CO} Proposition 2.10), which gives naturality for the maps $H_{Morse}^{n-k}(W;\mathbb{Q}) \to SH_k(W,d\lambda)$. One can further identify Morse and singular cohomology in a natural manner compatible with the action of diffeomorphisms (\cite[Section 4.2]{Schwarz}).

\subsection*{Induced maps on $SH_+^*(W_{flex},d\lambda$)} It was shown by Lazarev \cite{Laz} that boundaries of {\em flexible} Weinstein domains are {\em asymptotically dynamically convex} (ADC). From this one deduces that the positive symplectic homology is independent of the filling (\cite{Laz}, Proposition 1.8). This is achieved via a neck stretching argument which shows that all Floer cylinders involved in the definition of $SH^+_*(W,d\lambda)$ can be assumed to lie in the symplectic end for a cofinal collection of Hamiltonians. The same argument also holds for all cylinders used to define the relevant continuation maps (cf.\ \cite{Laz}, Proposition 3.9). Thus in the case of flexible domains (or more generally ADC contact manifolds), one has a natural induced action of an element in $\textrm{Symp}_\partial(W,d\lambda)$ on $SH^+_*(W,d\lambda)$, which in turn is {\em trivial}. We remark that in general it is unclear whether a compactly supported symplectomorphism 
induces the identity map on $SH^+_*(W,d\lambda)$.

Now for flexible Weinstein domains one has the following vanishing result, which follows from the exact triangle for symplectic homology established by Bourgeois-Ekholm-Eliashberg \cite{BEE} and the vanishing of Legendrian contact homology for flexible Legendrians (cf.\ \cite[Section 5]{Murphy} and \cite[Proposition 4.8]{EES}):

\begin{theorem} \label{thm:W_flex}
If $(W_{flex},d\lambda)$ is a flexible Weinstein domain, then $SH_*(W_{flex},d\lambda) =0$. 
\qed
\end{theorem}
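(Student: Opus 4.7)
The plan is to exploit the definition of flexibility: by construction, $(W_{flex},d\lambda)$ admits a Weinstein handle decomposition in which all subcritical handles are attached first, producing a subcritical Weinstein subdomain $W_{sub}\subset W_{flex}$, and then the critical (index $q$) handles are attached along a \emph{loose} Legendrian link $\Lambda=\Lambda_1\sqcup\cdots\sqcup\Lambda_k\subset(\partial W_{sub},\xi_{sub})$. I would take this as the geometric input and feed it into the Bourgeois--Ekholm--Eliashberg surgery formula for symplectic homology.

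First I would recall Cieliebak's vanishing theorem: the symplectic homology of any subcritical Weinstein domain is zero, so $SH_*(W_{sub},d\lambda)=0$. This handles the ``ambient'' term in the surgery exact triangle.

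Next I would invoke the BEE surgery exact triangle \cite{BEE}, which expresses $SH_*(W_{flex},d\lambda)$ as fitting into a long exact sequence of the schematic form
\[
 \cdots \to LH^{Hoch}_{*}(\Lambda)\to SH_*(W_{sub},d\lambda)\to SH_*(W_{flex},d\lambda)\to LH^{Hoch}_{*-1}(\Lambda)\to\cdots,
\]
where $LH^{Hoch}_*(\Lambda)$ denotes the (linearized/Hochschild) Legendrian contact homology of $\Lambda$ computed in $\partial W_{sub}$, with differential counting holomorphic discs in $W_{sub}$. The input from flexibility is that $\Lambda$ is loose: by the vanishing result for the Chekanov--Eliashberg DGA of loose Legendrians \cite{EES} (together with its Murphy-style refinements), every flavour of Legendrian contact homology of $\Lambda$ that appears on the outer terms of the BEE triangle vanishes. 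With both outer groups $SH_*(W_{sub},d\lambda)$ and $LH^{Hoch}_*(\Lambda)$ trivial, exactness immediately forces $SH_*(W_{flex},d\lambda)=0$.

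The main conceptual obstacle, which I would not try to reprove, is setting up the BEE exact triangle and checking that the functorial package respects the chosen handle decomposition and the grading (this is where the standing assumption $c_1=0$ is genuinely used, so that $SH_*$ and the Legendrian homologies are $\Z$-graded and the triangle makes sense term by term). Once those machines are in place, the argument is purely formal: flexibility gives looseness, looseness kills the Legendrian input, subcriticality kills the ambient input, and the triangle collapses to give the desired vanishing.
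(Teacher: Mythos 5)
Your proposal follows essentially the same route the paper indicates: it cites exactly the two ingredients the authors invoke, namely the Bourgeois--Ekholm--Eliashberg surgery exact triangle and the vanishing of Legendrian contact homology for loose Legendrians, together with the (implicit in the paper) vanishing of $SH_*$ for the subcritical subdomain. The paper gives no further detail beyond these citations, so your fleshed-out version is a faithful and correct expansion of the intended argument.
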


\noindent As an immediate consequence of the discussion above and Theorem~\ref{thm:W_flex} we have
\begin{corollary}\label{cor:Identity}
Let $(W_{flex},d\lambda)$ be a flexible Weinstein domain and let $\varphi$ be an exact symplectomorphism supported in the interior of $W_{flex}$. Then $\varphi^* = \Id$ on $H^*(W_{flex};\mathbb{Q})$. \qed
\end{corollary}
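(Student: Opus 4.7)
The proof should be a quick formal consequence of the preceding vanishing theorem together with the Viterbo long exact sequence and its naturality. First I would substitute $SH_*(W_{flex}, d\lambda) = 0$ into the long exact sequence recalled above,
\[
\cdots \to H^{n-k}(W_{flex}) \to SH_k(W_{flex}) \to SH^+_k(W_{flex}) \to H^{n-k+1}(W_{flex}) \to SH_{k+1}(W_{flex}) \to \cdots
\]
The vanishing of the two neighbouring $SH$-terms forces the connecting homomorphism
\[
\delta \colon SH^+_k(W_{flex}) \xrightarrow{\ \cong\ } H^{n-k+1}(W_{flex})
\]
to be an isomorphism in every degree.

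Next I would invoke the naturality of the Viterbo sequence with respect to the action of $\textrm{Symp}_c(W_{flex}, d\lambda)$ recalled in the background subsection. For $\varphi \in \textrm{Symp}_c(W_{flex}, d\lambda)$, the induced action commutes with every arrow of the sequence, in particular $\delta \circ \varphi^* = \varphi^* \circ \delta$. As already noted in the preceding background paragraph, the generators of $SH^+_*$ correspond to nonconstant Reeb orbits sitting in the boundary completion, hence disjoint from the support of $\varphi$, so $\varphi$ acts as the identity on $SH^+_*$. Transporting this triviality across $\delta$ then yields $\varphi^* = \Id^*$ on $H^{*}(W_{flex})$.

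The only point requiring a touch of care is the naturality of the connecting map $\delta$ with respect to $\textrm{Symp}_c$; however this is standard, as $\delta$ arises from a tautological short exact sequence of Floer chain complexes on which the action of $\varphi$ is compatible both on the Morse subcomplex (inducing $\varphi^*$ on ordinary cohomology) and on the positive quotient. I therefore do not expect any genuine obstacle: the argument is essentially a diagram chase combined with the two recalled inputs.
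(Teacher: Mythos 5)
Your argument is exactly the one the paper intends: the vanishing $SH_*(W_{flex},d\lambda)=0$ makes the connecting map $SH^+_k \to H^{n-k+1}(W_{flex})$ in the recalled long exact sequence an isomorphism, and naturality together with the triviality of the $\Symp_c$-action on $SH^+_*$ transports $\Id$ to $H^*(W_{flex})$. The paper presents the corollary as an ``immediate consequence'' of precisely these three recalled facts, so your write-up matches its proof.
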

In particular, this puts strong restrictions on what diffeomorphisms can be realized by (exact) symplectomorphisms in that they must belong to the kernel of the natural map
$$\Diff_\del(W_{flex}) \to \textrm{Aut}(H^*\left(W_{flex})\right).$$
In fact given our current state of understanding one might conjecture the following:
\begin{conjecture}\label{conj:Identity}
Any compactly supported symplectomorphism of a flexible Weinstein structure is isotopic/homotopic to the identity.
\end{conjecture}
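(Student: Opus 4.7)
The strategy is to combine Corollary \ref{cor:Identity} with the parametric relative $h$-principle of Cieliebak-Eliashberg-Murphy for flexible Weinstein structures, in three steps. Given $\varphi \in \Symp_c(W_{flex}, d\lambda)$, consider the pulled-back Weinstein datum $\varphi^{*}\mathcal{W} = (d\lambda, \varphi^{*}X, \phi\circ\varphi)$, where $\mathcal{W} = (d\lambda, X, \phi)$ denotes the original data. Since $\varphi$ is a symplectomorphism compactly supported in the interior, $\varphi^{*}\mathcal{W}$ is another flexible Weinstein structure sharing the symplectic form $d\lambda$ and agreeing with $\mathcal{W}$ near $\partial W_{flex}$. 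The plan is to build a Weinstein homotopy from $\mathcal{W}$ to $\varphi^{*}\mathcal{W}$ and integrate it to a symplectic isotopy from $\Id$ to $\varphi$.

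The first step is to verify that $\varphi$ is formally isotopic to $\Id_{W}$ through compactly supported bundle automorphisms of $TW_{flex}$ respecting the almost complex structure. The triviality of $\varphi^{*}$ on $H^{*}(W_{flex})$ provided by Corollary \ref{cor:Identity} is a necessary condition, and in favourable cases (e.g.\ $W_{flex}$ simply connected, or with sufficient connectivity) this together with standard obstruction theory for bundle automorphisms should suffice. In general, however, controlling the full homotopy class of $d\varphi$ requires additional topological input, and this is the step I expect to be the main obstacle.

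Granted a formal isotopy, the second step applies the relative parametric $h$-principle to interpolate $\mathcal{W}$ and $\varphi^{*}\mathcal{W}$ by a family $\mathcal{W}_{t} = (d\lambda_{t}, X_{t}, \phi_{t})$ of flexible Weinstein structures, stationary near $\partial W_{flex}$. Each difference $\lambda_{t}-\lambda_{0}$ is then compactly supported in the interior, so $d\lambda_{t}$ and $d\lambda_{0}$ represent the same compactly supported cohomology class, and the relative Moser trick yields $\psi_{t} \in \Diff_{c}(W_{flex})$ with $\psi_{0} = \Id$ and $\psi_{t}^{*}(d\lambda_{t}) = d\lambda_{0}$. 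The composition $\varphi \circ \psi_{1}$ is then a compactly supported symplectomorphism of $(W_{flex}, d\lambda)$ that preserves the entire Weinstein datum $\mathcal{W}$, and in particular its isotropic skeleton.

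The third step is to show that any compactly supported symplectomorphism preserving the full flexible Weinstein structure is symplectically isotopic to the identity. Using Weinstein neighborhoods of the strata of the skeleton and the explicit models for flexible critical handles coming from Murphy's loose Legendrian technology, one may attempt a handle-by-handle isotopy, reducing ultimately to the rigidity of compactly supported symplectomorphisms of standard neighborhoods of isotropic cells. Together with Step 1, the delicacy of this final rigidity step is, I believe, what keeps the statement conjectural at present.
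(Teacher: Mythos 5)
The statement you are trying to prove is presented in the paper as an open \emph{conjecture}: the authors give no proof, only the heuristic remark that flexible Weinstein manifolds contain no Lagrangian spheres and hence admit no Dehn--Seidel twists. So there is no argument in the paper to compare yours against, and the relevant question is whether your outline actually closes the conjecture. It does not, and you are right to flag where it breaks down; let me make the gaps precise. First, Corollary \ref{cor:Identity} only controls the action of $\varphi$ on $H^*(W_{flex})$. This says nothing about the class of $\varphi$ in $\pi_0 \Diff_c(W_{flex})$, nor about the homotopy class of $d\varphi$ as a compactly supported complex bundle automorphism; both are needed for the ``formal isotopy'' in your Step 1, and cohomological triviality is very far from sufficient (already $\pi_0\Diff_c$ of a disc is nontrivial in general). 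Second, your Moser argument in Step 2 only normalizes the symplectic forms along the Weinstein homotopy: $\psi_1^*(d\lambda_1)=d\lambda$ does not imply that $\psi_1$ matches the Liouville fields or the Lyapunov functions, so the assertion that $\varphi\circ\psi_1$ ``preserves the entire Weinstein datum $\mathcal{W}$, and in particular its isotropic skeleton'' is unjustified. Moreover, the $h$-principle produces a Weinstein \emph{homotopy}, which need not be constant near $\partial W_{flex}$ in the contact direction; the paper's own Theorem \ref{thm:loops} is built precisely on the fact that such homotopies can produce non-contractible loops of contact structures, i.e.\ that one cannot in general integrate them back to an isotopy of $\varphi$ to $\Id$.

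Third, and most decisively, the ``rigidity'' you invoke in Step 3 contains as its very first instance (a flexible Weinstein domain with a single subcritical $0$-handle, i.e.\ the standard ball) the question of whether $\textup{Symp}_c(\R^{2q},\omega_{std})$ is connected, which is a famous open problem for $2q\ge 6$. So the base case of your proposed handle-by-handle induction is itself open, independently of the difficulties with loose Legendrian handle models. In short, your proposal is a sensible road map that correctly identifies the obstacles, but each of its three steps has an essential unproved ingredient, and the statement remains a conjecture.
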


To garner some intuition for the conjecture,
it is known that flexible Weinstein manifolds admit no lagrangian spheres so that there can be no Dehn-Seidel twists in their symplectomorphism groups.
\subsection{Open Books}
We recall some definitions as well as the fundamental result of Giroux-Mohsen on the existence of supporting open books on contact manifolds
\begin{definition}[Contact Open Book]
An open book supports a contact structure, if there exists a contact form $\alpha$ that is a contact form on the binding, $d\alpha$ is a symplectic form on the interior of the pages and the  orientations on the binding induced by the contact form and the symplectic form agree.
\end{definition}
Moreover, one can always modify the contact form near the binding so that the Reeb flow is periodic close to the boundary of a neighbourhood of the binding and has a specific form near the binding (cf.\ \cite[Section 3]{DGZ}, \cite{VanKoe}). In particular, one can take a {\em smooth} open book whose monodromy is a symplectomorphism of the page with compact support. However, this monodromy is not the first return map of a Reeb flow. One can then apply a further isotopy to assume that the monodromy is exact, i.e.\ preserves the primitive $\lambda$ up to $df$ for some compactly supported function (cf.\ \cite[Chapter 7.3]{Ge}). By slight abuse we call the resulting map the symplectic monodromy of the open book, as its symplectic isotopy class is well defined (cf.\ \cite{Gr,VanKoe}).
 
 A priori, the page of such an open book need only be a Liouville manifold. However, Giroux-Mohsen showed that such supporting open books always exist and the page can taken to be Weinstein. Thus we have the following:
\begin{theorem}[Giroux-Mohsen \cite{Groux}]
Any contact manifold has a supporting open book, whose page is a Weinstein manifold $(W, \lambda)$ and whose symplectic monodromy lies in the group $\textrm{Symp}_\partial(W,d\lambda)$.
\end{theorem}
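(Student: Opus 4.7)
The plan is to split the proof into three parts: (i) the existence of a supporting smooth open book, (ii) the upgrade of the page to a Weinstein manifold, and (iii) the improvement of the monodromy to an exact compactly supported symplectomorphism.

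For (i) I would follow the approximately holomorphic approach of Giroux. Given a contact form $\alpha$ with $\xi = \ker \alpha$ and a compatible almost complex structure $J$ on the contact distribution, one forms an auxiliary Hermitian line bundle $L \to M$ with a connection whose curvature represents $-i\,d\alpha$. For $k$ large, Donaldson--Auroux type approximately holomorphic techniques, in the contact adaptation due to Giroux, produce two sections $s_0, s_1$ of $L^{\otimes k}$ such that, after a transversality perturbation, their common zero set $B$ is a codimension-two contact submanifold, and the map $s_0/s_1 \colon M \setminus B \to \mathbb{CP}^1$, composed with a choice of angular coordinate, yields a fibration $M \setminus B \to S^1$ whose fibres are transverse to the Reeb flow of $\alpha$. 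This is precisely the datum of an open book supporting $\xi$.

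For (ii) each page $P$ is an exact symplectic manifold with form $d\alpha|_P$. After a standard normalization of $\alpha$ near $B$ so that the Reeb flow is vertical in a tubular neighbourhood (cf.\ \cite[Sec.~3]{DGZ}, \cite{VanKoe}), a Liouville vector field for $d\alpha|_P$ can be chosen to be outward-pointing near $\del P = B$, producing a Liouville domain $(W, d\lambda)$. To promote this to a Weinstein structure admitting a Morse function of index at most $q$, one applies Eliashberg's existence $h$-principle recalled earlier: the compatible almost complex structure on $\xi$ restricts to one on $W$, and the Morse theory of $W$ is already $q$-sub-critical since $W$ is the fibre of a Lefschetz-type fibration. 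For (iii) the normalized form near $B$ already ensures that the smooth monodromy is a compactly supported symplectomorphism $f$ of $(W, d\lambda)$. To upgrade it to an element of $\textrm{Symp}_c(W, d\lambda)$, one argues as in \cite[Ch.~7.3]{Ge}: the closed compactly supported $1$-form $f^*\lambda - \lambda$ can be made exact with compactly supported primitive after a symplectic isotopy of $f$ generated by a suitable Hamiltonian, giving the desired exact representative in the symplectic isotopy class.

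The main obstacle is clearly step (i), the approximately holomorphic construction of the open book itself, which requires the delicate transversality and estimated-holomorphic-section estimates of Donaldson--Auroux adapted to the contact setting, together with the verification that the resulting angular fibration is transverse to the Reeb flow for $k$ sufficiently large. Steps (ii) and (iii) are formal by comparison: (ii) reduces to the Eliashberg existence theorem stated above, and (iii) is a standard application of Moser's argument combined with the exactness of the relevant cohomology class on the page.
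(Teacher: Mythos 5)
This theorem is not proved in the paper at all: it is quoted as a black box from Giroux--Mohsen, with the surrounding remarks (normalization near the binding, exactness of the monodromy) also delegated to the cited references \cite{DGZ}, \cite{VanKoe}, \cite{Ge}. So the only meaningful review is of your outline against the actual Giroux--Mohsen argument, and there your step (ii) contains a genuine gap. You propose to first extract a Liouville page $(W, d\alpha|_P)$ and then ``promote'' it to a Weinstein domain by invoking Eliashberg's existence $h$-principle. That theorem produces \emph{some} Weinstein structure in a prescribed formal (almost complex) class on a manifold admitting a subcritical Morse function; it gives no control over the resulting symplectic form up to the equivalence you need, and in particular does not let you keep the contact structure induced on the boundary or the compatibility with the ambient open book. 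Indeed, whether an arbitrary Liouville domain is deformation equivalent to a Weinstein one is an open problem, so no soft upgrade of this kind can exist. In the Giroux--Mohsen construction the Weinstein (in fact Stein) structure on the page is not grafted on afterwards: it comes directly from the approximately holomorphic data, because a function of the form $-\log\|s\|^2$ is approximately plurisubharmonic and restricts to a Lyapunov function for a Liouville field on each page, which is also where the bound ``index $\le q$'' on the critical points comes from (not, as you assert, from the page being ``the fibre of a Lefschetz-type fibration'').

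Two smaller points. First, in step (i) the quotient $s_0/s_1 \colon M \setminus B \to \mathbb{CP}^1$ of two sections is the contact \emph{pencil} construction; the open book uses a single approximately holomorphic section $s$ of $L^{\otimes k}$, with binding $B = s^{-1}(0)$ and fibration $s/\|s\| \colon M \setminus B \to S^1$. Second, your step (iii) is in the right spirit and matches what the paper itself says (normalize near the binding to get a compactly supported symplectomorphism, then isotope to an exact one as in \cite[Ch.~7.3]{Ge}), though the mechanism is the Liouville flow rather than a Hamiltonian isotopy killing the class of $f^*\lambda - \lambda$; that class need not vanish in $H^1_c$ a priori, which is exactly why the composition with the Liouville flow is used.
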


\subsection{Variation and the homology of open books}
\label{s:intro}
We compute the homology of an open book
in terms of the action of the map induced by
the monodromy on homology groups and in particular the {\em variation} of the monodromy.

\medskip

 \noindent \textbf{The variation of certain map of pairs:} Let $(X, A)$ be a pair and $f \colon (X, A) \to (X, A)$ a 
 map of pairs such that $f|_A = \Id_A$.
 The (homological) {\em variation} of $f$ is the map
 \[ \var(f) \colon H_*(X, A) \to H_*(X), \]
 which sends a class $[c] \in H_*(X, A)$ represented by a relative cycle $c$ to the class $[f_*(c) - c]$;
see, e.g.\ \cite[\S 2.2]{KK}.
As we shall see, the variation $\var(f)$ of the monodromy $f \colon W \to W$ of 
an open book determines the homology of the open book.
\medskip

 \noindent \textbf{Doubles and a homology long exact sequence:} 
 Let $DW = W_0 \cup_\Id W_1$ denote the double of a $2q$-dimensional manifold with boundary.
To avoid confusion we label the copies of $W$ with $0$ and $1$.
Consider the pair $(DW, W_0)$ and  $f \colon W_0 \to W_0$ a diffeomorphism
which is the identity near the boundary. We then have the diffeomorphism 
\[ \ext(f) \colon (DW, W_0) \to (DW, W_0),\]
defined by setting $\ext(f)|_{W_0} = f$ and $\ext(f)|_{W_1} = \Id_{W_1}$.
%
The inclusion $W_1 \to DW$ admits a retraction defined by the composition of the
inclusion $DW \to W_1 \times I$ and the projection $W_1 \times I \to W_1$.
It follows that the homology long exact sequence of the pair $(DW, W_1)$ splits
and so
\begin{equation} \label{eq:H_*(DW)}
 H_i(DW) \cong H_i(W_1) \oplus H_i(DW, W_1) \cong H_i(W_1) \oplus H_i(W_0, \del W_0) 
\cong H_i(W_1) \oplus H^{2q-i}(W_0). 
\end{equation}
Moreover, the isomorphism
\[ H_i(DW, W_1) \cong \Ker \left[ H_i(DW) \to H_i(W_1 \times I) \right] \]
ensures that in the splitting of \eqref{eq:H_*(DW)}, the summand $H_i(DW, W_1)$ is 
identified with the group $\Ker \left[ H_i(DW) \to H_i(W_1 \times I) \right]$.
The maps $\ext(f)$ and $f$ then induce a map of split short exact sequences:
\[\xymatrix{
H_i(W_1) \ar[d]^\Id \ar[r] & 
H_i(W_1) \oplus H_i(W_0, \del W_0) \ar[d]^{\ext(f)_*} \ar[r] &
H_i(W_0, \del W_0) \ar[d]^{f_*} \ar@/_1.1ex/[dll]_(0.75){\var(f)} \\
H_i(W_1) \ar[r] & 
H_i(W_1) \oplus H_i(W_0, \del W_0)\ar[r] &
H_i(W_0, \del W_0)
}\]
With respect to the splitting of \eqref{eq:H_*(DW)}, the induced map $\ext(f)_* \colon H_i(DW) \to H_i(DW)$
can be expressed as a two-by-two matrix, which we denote $A(\ext(f)_*)$.
The following lemma is presumably well-known, but we include its proof for completeness.

\begin{lemma}  \label{lem:var(f)}
%
%
The matrix $A(\ext(f)_*)$ has the form
\[ A(\ext(f)_*) = \twotwo{\Id}{\var(f)}{0}{f_*}, \]
where we identify $H_i(W_0, \del W_0) = H_i(W_1, \del W_1)$ via the identification $W_0 = W_1$, 
so that $\var(f) \colon H_i(W_1, \del W_1) \to H_i(W_1)$.
%
\end{lemma}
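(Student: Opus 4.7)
The plan is to compute each of the four entries of $A(\ext(f)_*)$ in turn, using the splitting of $H_q(DW)$ induced by the retraction $r \colon DW \to W_1$ described above.

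First I would handle the entries in the first column. Since $\ext(f)|_{W_1} = \Id_{W_1}$, the inclusion $W_1 \hookrightarrow DW$ is fixed pointwise by $\ext(f)$, so any $\alpha \in H_q(W_1)$ satisfies $\ext(f)_*(\alpha) = \alpha \in H_q(W_1) \subset H_q(DW)$. Under the splitting this gives the top-left entry $\Id$ and the bottom-left entry $0$. Next, the bottom-right entry is $f_*$ because $\ext(f)$ preserves the pair $(DW, W_1)$ and hence induces a map on $H_q(DW, W_1)$; under the excision isomorphism $H_q(DW, W_1) \cong H_q(W_0, \partial W_0)$ this induced map is $f_*$ since $\ext(f)|_{W_0} = f$. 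This is essentially the commutativity of the right-hand square in the displayed diagram.

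The main task is the top-right entry. Given $[c] \in H_q(W_0, \partial W_0)$ represented by a relative cycle $c \subset W_0$, I would lift $[c]$ to $H_q(DW)$ via the splitting as follows: write $c' \subset W_1$ for the mirror copy of $c$ under the canonical identification $W_0 = W_1$; then $\tilde c := c - c'$ is a cycle in $DW$ (since $\partial c = \partial c'$ in the common boundary $\partial W_0 = \partial W_1$), and satisfies $r_*(\tilde c) = 0$ by construction, so $\tilde c$ represents the desired lift of $[c]$ through the splitting. Applying $\ext(f)$ and using $\ext(f)|_{W_0} = f$ together with $\ext(f)|_{W_1} = \Id$ gives $\ext(f)_*[\tilde c] = [f(c) - c']$; projecting by $r_*$ into $H_q(W_1)$ and invoking the identification $W_0 = W_1$ then yields the class $[f(c) - c]$, which is by definition $\var(f)([c])$.

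The main obstacle, though minor, is purely bookkeeping: one must verify at the chain level that $\tilde c = c - c'$ is indeed a cycle in $DW$ (which requires $f|_{\partial W} = \Id$ so that $f(\partial c) = \partial c$, ensuring that the boundaries of $f(c)$ and $c'$ cancel) and carefully track the identifications between $W_0$, $W_1$, and their common boundary $\partial W_0 = \partial W_1$ throughout.
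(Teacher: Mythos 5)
Your proposal is correct and follows essentially the same route as the paper: both lift $[c]$ to the cycle $c_0 - c_1$ in the kernel of $H_q(DW) \to H_q(W_1)$, apply $\ext(f)$, and identify the $H_q(W_1)$-component of the result with $\var(f)([c])$ (the paper does this by adding and subtracting $f_*(c_1)$, you by applying the retraction $r_*$ — the same computation). No gaps.
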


\begin{proof}
The components $\Id, 0$, and $f_*$ of $A(\ext(f)_*)$ are clear.  It remains
to determine the final component.
Let $[c] \in H_i(W_0, \del W_0)$ be represented by a relative $i$-cycle $c$.
The corresponding class in $\Ker \left[ H_i(DW) \to H_i(W_1 \times I) \right]$
is represented by the cycle $c_0 - c_1$, where $c_1 \in H_i(W_1, \del W_1)$
is equal to $c_0$ under the identification $W_0 = W_1$.
Now we have 
\[ \ext(f)_*([c_0 - c_1]) =  [f_*(c_0) - c_1] = [f_*(c_0) - f_*(c_1) + f_*(c_1) - c_1] = f_*([c_0 - c_1]) + \var(f)([c_1]),\]
which completes the proof.
\end{proof}

 \noindent \textbf{Computing the homology of a Weinstein open book:} 
 We now 
 assume that $W$ is the page of a contact open book which is in addition Weinstein; in particular, $W$ is a handlebody which is built with handles of index $q$ or less.
Since $W_0$ is homotopy equivalent to a $q$-dimensional $CW$-complex, it follows that
if $i < q$, then $H^{2q-i}(W_0) = 0$ and in this case the map $H_i(W_j) \to H_i(DW)$ is
an isomorphism.  We now describe the role of the variation in computing the homology groups of $M$.
 
 \begin{lemma} \label{lem:H_*(M)}
Let $M$ be a $(2q{+}1)$-dimensional contact open book with page $W$ and monodromy $f \colon W \to W$.
Then
\[ H_i(M) \cong 
\begin{cases}
H_i(W) & i < q \\
\Coker(\var(f)) & i = q \\
\Ker(\var(f)) & i = q{+}1 \\
H^{2q+1-i}(W) & i > q{+}1.
\end{cases}
\] 
\end{lemma}
 
\begin{remark}
Since the identity $\Id \colon W \to W$ has zero variation, Lemma~\ref{lem:H_*(M)} entails that
$\var(f) = 0$ if and only if $H_*(M)$ is isomorphic to the homology of the trivial open book.
\end{remark} 
 
\begin{proof}[Proof of Lemma~\ref{lem:H_*(M)}]
To keep the notation compact, we write $W' = W \times I$ and label the two copies
of $W'$ in $M$ as $W'_0$ and $W'_1$, so that
\[ M = W'_0 \cup_{\,\ext(f)} W'_1, \]
%
%
%
where we have identified $\del W'_0 = \del W'_1 = DW$.
Now we consider the Mayer-Vietoris sequence for $H_*(M)$ coming from the above decomposition. 
Writing $H_i(W_0, \del W_0) = H^{2q-i}(W_0)$ and $H_*(DW) = H_*(W_1) \oplus H^{2q-*}(W_0)$ we have:
\[ \dots \xra{} H_i(W_1) \oplus H^{2q-i}(W_0) \xra{i_{0*} \oplus i_{1*}} H_i(W'_0) \oplus H_i(W'_1) 
\xra{} H_i(M) \xra{} \dots \]
For $i < q$, we see that the inclusion $H_i(W'_j) \to H_i(M)$ is an isomorphism.
When $i = q$, Lemma~\ref{lem:var(f)} entails that
the map $i_{0*} \oplus i_{1*}$ can be expressed via the matrix
\[ A(i_{0*} \oplus i_{1*}) = \twotwo{\Id}{0}{\Id}{\var(f)}, \]
%
and so there is an exact sequence
\[ 0 \to H_{q+1}(M) \to H^q(W_0) \xra{\var(f)} H_q(W_{1}) \to H_q(M) \to 0.\] 
%
%
When $i > q{+}1$, $H_{i}(M) \cong H^{2q+1-i}(M)$ by Poincar\'e duality, 
and this completes the proof of the lemma.
\end{proof}

%
%
%
%

\medskip

\noindent \textbf{A criterion for trivial variation:}  Observe that a handle decomposition of $W$ induces one on the double 
$DW = W_0 \cup_\Id W_1$ by considering the dual handle decomposition on $W_1$ relative to the boundary 
$\partial W_1 = \partial W$.  Now let $W'$ be the union of all handles of 
index $q$ or less in the above handle decomposition of $DW$,
so that $W' = W_0 \cup W''$, where $W'' \subset W_1$ is the union of a collar of $\del W_1$ and the relative $q$-handles.
Observe that $W' \subset DW$ is a regular neighbourhood of a $q$-skeleton of $DW$
and so the inclusion $W' \to DW$ induces a surjection on homology in degree $q$.
Moreover, $e(f)$ preserves $W'$
and so we have the following

\begin{lemma}\label{lem:torsion_free}
Suppose that $\ext(f)|_{W'}$ induces the identity on $H^q(W'; \Q)$,
then the variation of $f$ is trivial and thus $H_q(M)$ is torsion free.
\end{lemma}

\begin{proof}
The natural isomorphism $H^q(W'; \Q) \cong \mathrm{Hom}(H_q(W'; \Q), \Q)$
entails that $\ext(f)|_{W'}$ induces the identity on $H_q(W'; \Q)$.
Since  $H_q(W'; \Q) \cong H_q(W') \otimes_\Z \Q$ and $H_q(W')$ is torsion free,
it follows that $\ext(f)|_{W'}$ induces the identity on $H_q(W')$.
Since the induced map $H_q(W') \to H_q(DW)$ is onto, the commutative diagram,
\[
\xymatrix{
H_q(W') \ar@{->>}[d] \ar[r]^{\Id} & 
H_q(W') \ar@{->>}[d]\\
H_q(DW) \ar[r]^{\ext(f)_*} & H_q(DW),
} \]
shows that $\ext(f)_* \colon H_q(DW) \to H_q(DW)$ is the identity.
By Lemma~\ref{lem:var(f)}, $\var(f) = 0$ and so $H_q(M) \cong H_q(W)$ by Lemma~\ref{lem:H_*(M)}.
\end{proof}

\section{Proofs}
We are now ready to prove the main results from the Introduction.
\begin{proof}[Proof of Theorem \ref{thm:main}]
Suppose that $(M,\xi)$ is supported by a flexible open book with page $(W_{flex},d\lambda)$. Then this yields a description of $M$ as a twisted double with attaching map $e(f)$ that is the identity on a 
{\em neighbourhood} of $W_1 \subseteq DW$. Moreover, we can arrange that $f \in \Symp_\partial(W_{flex},d\lambda)$ is an exact symplectomorphism with compact support in the interior of $W = W_0$. Furthermore, if $c_1(\xi)$ vanishes on $2$-spheres,
then the same holds for the page, 
since the inclusion $W \to M$ is $q$-connected and $q \geq 3$.

Now the submanifold $W' \subset M$ has a trivial normal bundle and so inherits a stable almost complex structure from
the (almost) contact manifold $(M, \xi)$.
Since homotopy classes of stable almost complex structures are equivalent to actual almost complex structures for open manifolds 
(cf.\ e.g.\ \cite[Lemma 2.16]{BCS}), we see that $W'$ admits an almost complex structure extending the complex structure $(W_0,J_0)$ that underlies the flexible Weinstein structure $(W_{flex},d\lambda)$ on the page. In particular, we can extend the flexible structure to $W'$ by Theorem \ref{thm_CE_extend} and since $e(f) = \Id$ on the extra $q$-handles we thus obtain an exact symplectomorphism $f' = e(f)|_{W'}$ of a flexible Weinstein structure on $W'_{flex} \cong W'$.

By Corollary \ref{cor:Identity}, $f'$ induces the identity on $H^q(W'; \Q)$, and so by Lemma \ref{lem:torsion_free}
the group $H_q(M)$ is torsion free.
\end{proof}
\begin{proof}[Proof of Corollary \ref{cor:main}]
By the main result of Lazarev \cite{Laz}, any two flexible Weinstein fillings $W,W'$ of a fixed flexible contact structure on the binding satisfy $H_*(W) \cong H_*(W')$. Then using the fact that the variation of the monodromy is trivial and applying Lemma \ref{lem:H_*(M)} implies that the same holds for the homology of the manifolds themselves.
\end{proof}

\section{Loops of flexible contact structures}
Consider a flexible Weinstein domain $(W_{flex},d\lambda)$. Let $\Diff_\del(W_{flex})$ denote the group of diffeomorphisms with support in the interior of $W_{flex}$ and let  $\Diff_\del(W_{flex},[J])$ denote the subgroup that preserves the homotopy class of almost complex structures $J=J_\omega$ compatible with $\omega = d\lambda$. Equivalently for any $f \in \Diff_\del(W_{flex},[J])$ the form $\omega$ is homotopic to $f^*\omega$ in the space of non-degenerate (but not necessarily exact) $2$-forms. We then have the following:

\begin{theorem}\label{thm:loops}
Let $(W_{flex},d\lambda)$ be a flexible Weinstein domain of dimension $2q \ge 6$ with contact boundary $(M,\xi_{flex})$ and let $f \in \Diff_\del(W_{flex}, [J])$ be a diffeomorphism that acts non-trivially on rational cohomology and preserves $J$ up to homotopy. Then the fundamental group of the space of contact structures isotopic to $\xi_{flex}$ is non-trivial.
\end{theorem}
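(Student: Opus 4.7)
The plan is to follow the heuristic sketched in the paragraph preceding the statement and turn it into a contradiction argument: assuming the loop $\xi_t$ is contractible in the space of contact structures isotopic to $\xi$, I will produce an isotopy of $f$ (compactly supported) to a (exact) symplectomorphism of $(W_{flex}, d\lambda)$, and then invoke Corollary \ref{cor:Identity} to conclude that $f$ acts trivially on $H^*(W_{flex})$, contradicting the hypothesis.

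First I would construct the loop of contact structures. Since $f \in \Diff_c(W_{flex}, [J])$ preserves the formal symplectic class, the pulled-back structure $f^*(d\lambda)$ is formally homotopic to $d\lambda$ through non-degenerate exact $2$-forms. The Cieliebak--Eliashberg--Murphy h-principle then yields a path $d\lambda_t$, $t \in [0,1]$, of flexible Weinstein structures with $d\lambda_0 = d\lambda$ and $d\lambda_1 = f^*(d\lambda)$. Because $f$ is compactly supported in the interior, $\lambda_0$ and $\lambda_1$ agree with $\lambda$ near $\partial W_{flex}$, so the restriction of $\lambda_t$ to $M = \partial W_{flex}$ produces a loop of contact forms, and hence a loop $\xi_t = \ker(\lambda_t|_M)$ of contact structures based at $\xi$.

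Next, assume for contradiction that $[\xi_t] = 0$ in $\pi_1$ of the space of contact structures isotopic to $\xi$. Then using a standard collar/Gray-stability construction I can homotope the Weinstein deformation $d\lambda_t$, keeping its endpoints and flexibility, so that its restriction to a collar of the boundary is the constant path at $\lambda$. The relative version of the Cieliebak--Eliashberg h-principle (the third theorem in the Background section, applied to $W_{flex} \times I$ with the deformed Weinstein structure and fixing the Weinstein structure near $\partial W_{flex} \times I$) then provides an ambient isotopy $\phi_t \in \Diff_c(W_{flex})$ with $\phi_0 = \Id$ and $\phi_1^*(d\lambda) = f^*(d\lambda)$. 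Setting $h := f \circ \phi_1^{-1}$, the diffeomorphism $h$ is a compactly supported symplectomorphism of $(W_{flex}, d\lambda)$.

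To close the argument I would upgrade $h$ to an exact symplectomorphism: the $1$-form $h^*\lambda - \lambda$ is closed, compactly supported, and vanishes on $\partial W_{flex}$, so its cohomology class lies in $H^1_c(W_{flex})$; one can then precompose $h$ with a Hamiltonian isotopy (supported in the interior) to kill this class and obtain $\tilde h \in \Symp_c(W_{flex}, d\lambda)$ with $\tilde h^*\lambda = \lambda$, and such that $\tilde h$ is isotopic to $h$ through compactly supported diffeomorphisms. By Corollary \ref{cor:Identity}, $\tilde h^* = \Id$ on $H^*(W_{flex})$, hence $h^* = \Id$, and since $\phi_1$ is isotopic to $\Id$ we conclude $f^* = h^* \circ \phi_1^* = \Id$, contradicting the hypothesis that $f$ acts non-trivially on cohomology. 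The main technical obstacle I anticipate is the extraction of the isotopy $\phi_t$: one must carefully assemble the parametric rel-boundary h-principle for flexible Weinstein structures with the Moser-type trick so that the resulting isotopy is genuinely compactly supported; any leakage to the boundary would sever the link with the loop of contact structures and spoil the argument.
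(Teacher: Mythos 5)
Your proposal is correct and follows essentially the same route as the paper: build the loop $\xi_t$ from the h-principle path joining $d\lambda$ to $f^*(d\lambda)$, use contractibility of the loop to make the deformation constant near the boundary, convert this into a compactly supported (exact) symplectomorphism isotopic to $f$, and contradict Corollary \ref{cor:Identity}. The paper's own argument is only a sketch in the paragraph preceding the theorem, and your write-up supplies the same steps with somewhat more care (notably the upgrade to an exact symplectomorphism, which the paper leaves implicit).
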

\begin{proof}
First observe that the group $\Diff_\del(W_{flex},[J])$ acts on the space of flexible Weinstein structures $\mathfrak{Weinstein}(W_{flex},[J])$ in the fixed homotopy class. Moreover, in view of Eliashberg's $h$-principle pulling back under an $f\in \Diff_\del(W_{flex})$ gives a homotopic Weinstein structure. We thus obtain a deformation $\lambda_t$ via Liouville structures from $\lambda_0 = \lambda$ to $\lambda_1 = f^*\lambda$. This deformation is {\em a priori} non-constant along the boundary. We hence obtain a loop $\xi_t = Ker(\lambda_t)$ of contact structures via restriction to the boundary. Note that the associated loop may depend on the choice of deformation and is not necessarily canonical.

If this loop is contractible then we could alter the Weinstein deformation to be constant near the boundary. But then we could deform the initial diffeomorphism $f$ via an isotopy {\em relative to the boundary} to preserve the Weinstein structure. In particular, in view of Corollary \ref{cor:Identity} this would mean that $f$ acts trivially on ordinary cohomology, contradicting our hypotheses.
\end{proof}
There are many examples of diffeomorphisms of flexible Weinstein domains as in Theorem~\ref{thm:loops}.
For example, let $W = \sharp_g(S^q \times S^q) \setminus \mathrm{Int}(D^{2q})$ be a punctured copy of the $g$-fold
connected sum of a product of spheres $S^q \times S^q$ endowed with an almost complex structure $J$ and consider $W_{flex}$
with boundary $(S^{2q-1}, \xi_{flex})$.
If the natural map $\pi_q(SO) \to \pi_q(SO/U)$ vanishes; i.e.\ if $q \not \equiv 0, 7$ mod~$8$, then
$J$ is homotopic to $f^*J$ if and only if the Chern classes $c_{q/2}(W,J)$ and $c_{q/2}(W,f^*J)$ agree. 
This is because $W$ retracts onto a wedge of $q$-spheres
and so the sole obstruction to finding a homotopy between $J$ and $f^*J$ 
lies in the group $H^q(W; \pi_q(SO/U))$ and has image a non-zero multiple 
of $c_{q/2}(W,J) - c_{q/2}(W,f^*J)$ in the torsion free group $H^q(W; \pi_q(BU))$.
Of course, if $c_{q/2}(W, J) = 0$ then $c_{q/2}(W,J) = c_{q/2}(W,f^*J)$ and we note that $W$ always admits $J$
with vanishing Chern classes: if $q$ is odd then $c_{q/2} = 0$ by definition, and if $q$ is even
this follows as $W$ is parallelisable.
Since the map given by the induced action on cohomology, 
\[ \Diff_\del(W) \to \Aut(H^q(W)),\]
is surjective onto the automorphisms of $H^q(W)$ respecting the cup product pairing $\lambda_W$
and a certain quadratic refinement%
\footnote{When $q$ is even or $q = 3, 7$, $\mu_W$ is determined by $\lambda_W$ and so may be ignored.}
$\mu_W$ of $\lambda_W$ \cite[Theorem 2]{K},
we obtain many non-trivial loops of flexibly filled contact structures on $S^{2q-1}$.
If $q \not \equiv 0, 7$~mod~$8$, we can take any automorphism $\alpha \in \Aut((H^q(W), \lambda_W, \mu_W), c_{q/2}(J))$ 
and it will be realized by an element of $\Diff_\del(W)$, which preserves the homotopy class of $J$.
If the automorphism $\alpha$ has infinite order, so does the loop of contact structures.

When $q \equiv 0, 7$ mod~$8$ and $q \neq 7$, results and arguments of Randal-Williams 
\cite[Theorem 3.1 \& the discussion prior to Corollary 4.1]{R-W} prove that  
certain generalised Dehn twists generate
a subgroup of $\Diff_\del(W)$
which acts trivially on $H^*(W)$ and transitively on  
each set of homotopy classes of almost complex structures with $c_{q/2}$ fixed.
This implies that the 
discussion above also holds for these values of $q$.
When $q = 7$, a complex structure $J$ defines a quadratic refinement $\mu_J$ of $\lambda_W$,
and results from \cite{C} and \cite{K} entail that the above holds with $\mu_W$ replaced by $\mu_J$.


\begin{remark}
As pointed out to us by the referee, Theorem \ref{thm:loops} also applies if the (homological) variation of the diffeomorphism is non-trivial, 
even though the action on homology is trivial. 
Such a diffeomorphism is given by the (smooth) 
Dehn-Seidel twist $\tau$ on the flexibilisation of the cotangent disc bundle $W = D(T^*S^q$) of any $q$-sphere for $q$ odd.
\end{remark}


\begin{thebibliography}{999}
\scriptsize

\bibitem[BEM]{BEM} M. Borman, Y. Eliashberg and  E. Murphy,
{\em Existence and classification of overtwisted contact structures in all dimensions}, Acta Math.
{\bf 215} (2015), 281--361.

\bibitem[BCS]{BCS} J. Bowden, D. Crowley and A.  Stipsicz, 
{\em The topology of Stein fillable manifolds in high dimensions I},
Proc.\ Lond.\ Math.\ Soc.\ {\bf109}  (2014),  1363--1401. 

\bibitem[BEE]{BEE} F. Bourgeois, T. Ekholm, and Y. Eliashberg,
 {\em Effect of Legendrian surgery}, Geom.\ Topol.\ {\bf 6}  (2012), 301--389.

\bibitem[CE]{Cieliebak&Eliashberg12} K. Cieliebak and Y. Eliashberg,
{\em {F}rom Stein to Weinstein and Back: Symplectic geometry of affine complex manifolds},
American Mathematical Society Colloquium Publications, {\bf59}. American
Mathematical Society, Providence, RI, 2012.

\bibitem[CO]{CO} K. Cieliebak and A. Oancea,
{\em Symplectic homology and the Eilenberg-Steenrod axioms},
Alg.\ Geom.\ Topol.\ {\bf 18} (2018), 1953--2130.

\bibitem[CM]{Courte&Massot}  S. Courte and P. Massot
{\em Contactomorphism groups and Legendrian flexibility}, arXiv:1803.07997.

\bibitem[C]{C} D. Crowley, 
{On the mapping class groups of $\#_r(S^p \times S^p)$ for $p = 3, 7$}, 
Math.\ Zeit.\ {\bf 269} (2011), 1189--1199. 

\bibitem[DGZ]{DGZ} M. D\"orner, H. Geiges, and K. Zehmisch,
{\em Open books and the Weinstein conjecture},
Q.\ J.\ Math.\ \textbf{65} (2014), 869--885.

\bibitem[EES]{EES} T. Ekholm, J. Etnyre and M. Sullivan, 
{\em The Contact Homology of Legendrian Submanifolds in $\R^{2n+1}$},
J.\ \Diff.\ Geom.\ {\bf 71} (2005), 177--305.

\bibitem[E]{E} Y. Eliashberg, {\em Topological characterization of Stein manifolds of dimension} $>2$, 
Internat.\ J.\ Math.\ \textbf{1} (1990), 29--46.

\bibitem[Ge]{Ge} H. Geiges, 
{\em An introduction to contact topology}, Cambridge Studies in Advanced Mathematics \textbf{109}, 
Cambridge University Press, Cambridge, 2008.

\bibitem[Gi1]{Groux} E. Giroux,
{\em G\'eom\'etrie de contact: 
de la dimension trois vers les dimensions sup\'erieures},
Proceedings of the International Congress of Mathematicians, Vol. II
(Beijing, 2002), 405--414, Higher Ed. Press, Beijing, 2002.

\bibitem[Gi2]{Gr} E. Giroux,
{\em Ideal Liouville domains: A cool gadget}, arXiv:1708.08855.

 \bibitem[KK]{KK} L. Kaufmann and N. Krylov,
 {\em Kernel of the variation operator and periodicity of open books}, Topology Appl.\ {\bf 148} (2005), 183--200.  

\bibitem[K]{K} M.~Kreck, {\em Isotopy classes of diffeomorphisms of $(k{-}1)$-connected almost-parallelizable $2k$-manifolds}, 
Springer LMN 763, Berlin 1979

\bibitem[L]{Laz}  O. Lazarev,
{\em Contact Manifolds with Flexible Fillings},
Geom.\ Funct.\ Anal.\  {\bf 30} (2020), 188--254.

\bibitem[M]{Murphy}  E. Murphy,
{\em Loose Legendrian embeddings in high dimensional contact manifolds}, arXiv:1201.2245.

\bibitem[MS]{Murphy-Siegel}  E. Murphy and K. Siegel,
{\em Subflexible symplectic manifolds},
Geom.\ Topol.\ {\bf 22} (2018), 2367--2401.

\bibitem[R-W]{R-W} O. Randal-Williams,
{\em On diffeomorphisms acting on almost complex structures}.
Available at 
\href{https://www.dpmms.cam.ac.uk/~or257/ACstr.pdf}
{https:/\!/www.dpmms.cam.ac.uk/$\tilde{~}$or257/ACstr.pdf}

\bibitem[S]{Schwarz} M. Schwarz,
{\em Morse homology. Progress in Mathematics},
111. Birkh\"auser Verlag, Basel, 1993. 235 pp.

\bibitem[VanK]{VanKoe}  O. Van Koert,
{\em Lecture notes on stabilization of contact open books},
M\"{u}nster J.\ Math.\ {\bf 10} (2017), 425--455.

\end{thebibliography}
\end{document}